
\documentclass[10pt]{amsart}
\usepackage{enumerate,amsmath,amssymb,latexsym,
amsfonts, amsthm, amscd, MnSymbol}


\setlength{\textwidth}{14.5cm}
\setlength{\textheight}{22cm}

\hoffset=-2.0cm
\voffset=-.8cm


\theoremstyle{plain}

\newtheorem{theorem}{Theorem}

\numberwithin{equation}{section}

\newcommand{\sm}{\rm sm}
\newcommand{\cm}{\rm cm}
\newcommand{\sll}{\rm sl}

\newcommand{\ii}{\rm i}

\addtocounter{section}{-1}


\begin{document}

\title {The Elliptic Functions in a First-Order System}

\date{}

\author[P.L. Robinson]{P.L. Robinson}

\address{Department of Mathematics \\ University of Florida \\ Gainesville FL 32611  USA }

\email[]{paulr@ufl.edu}

\subjclass{} \keywords{}

\begin{abstract}

We investigate the first-order system `$s\,' = c^3, \, c\,' = - s^3; \, s(0) = 0, \, c(0) = 1$'. Its solutions have the property that $s \, c$, $s^2$ and $c^2$ extend to simply-poled elliptic functions, which we explicitly identify in terms of  the lemniscatic Weierstrass $\wp$ function. 

\end{abstract}

\maketitle

\medbreak

\section*{Introduction} 

The first-order initial value problem 
$$s\,' = c, \; c\,' = -s; \; \; s(0) = 0, \; c(0) = 1$$
needs no introduction: its solutions are entire, being the familiar trigonometric  functions $s = \sin$ and $c = \cos$. 

\medbreak 

The first-order initial value problem 
$$s\,' = c^2, \; c\,' = -s^2; \; \; s(0) = 0, \; c(0) = 1$$
is less well-known: its solutions are the elliptic functions $s = \sm$ and $c = \cm$ of Dixon [2]. Long after their discovery, these Dixonian elliptic functions were found to have interesting connexions to combinatorics [1] and geometry [5]. 

\medbreak 

Here we propose to study the `next' first-order initial value problem 
$$s\,' = c^3, \; c\,' = -s^3; \; \; s(0) = 0, \; c(0) = 1.$$
The standard Picard theorem ensures that this system has a unique holomorphic solution pair in a suitably small disc about $0$. It is easy to see that solutions to this system cannot be meromorphic throughout the plane; in particular, the solutions to the system are not elliptic. However, elliptic functions are still present: in fact, the quadratic combinations $s c$, $s^2$ and $c^2$ are elliptic; indeed, we shall here identify them in terms of the lemniscatic Weierstrass $\wp$ function and the associated lemniscatic sine function $\sll$ of Gauss. For these lemniscatic functions, see Chapter XXII of [6]. 

\medbreak 

\section*{A First-Order System} 

\medbreak 

This paper concerns the initial value problem 
$$s\,' = c^3, \; c\,' = -s^3; \; \; s(0) = 0, \; c(0) = 1$$
which will henceforth be referred to as {\bf IVP}. In this section, the discussion will be largely local: we primarily consider properties of the unique solutions to {\bf IVP} in an appropriate disc about the origin. The global questions involved in extending these solutions will be settled in the following section: as we shall see, the solutions themselves do not extend meromorphically, but their product and their squares actually extend as elliptic functions. 

\medbreak 

First of all, we observe that solutions to {\bf IVP} satisfy an identity akin to the `Pythagorean' trigonometric identity. 

\medbreak 

\begin{theorem} \label{Pyth}
Solutions $s$ and $c$ to {\bf IVP} in any connected open set containing $0$ satisfy 
$$s^4 + c^4 = 1.$$ 
\end{theorem}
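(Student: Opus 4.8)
The plan is to show that the function $f = s^4 + c^4$ is constant by differentiating, then evaluate the constant at the origin using the initial conditions. This is the standard conserved-quantity argument, exactly analogous to proving $\sin^2 + \cos^2 = 1$ via $(\sin^2 + \cos^2)' = 0$ and to the Dixonian identity $s^3 + c^3 = 1$ that arises for the cubic system.

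Concretely, I would first compute $f\,' = 4 s^3 s\,' + 4 c^3 c\,'$ using the chain rule. Substituting the defining relations $s\,' = c^3$ and $c\,' = -s^3$ from \textbf{IVP} gives
$$f\,' = 4 s^3 c^3 + 4 c^3 (-s^3) = 4 s^3 c^3 - 4 s^3 c^3 = 0.$$
Thus $f\,'$ vanishes identically on the connected open set. Since the set is connected, a function with vanishing derivative is constant there, so $f$ equals its value at $0$.

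Finally I would evaluate at the origin: from the initial conditions $s(0) = 0$ and $c(0) = 1$, we get $f(0) = 0^4 + 1^4 = 1$, whence $s^4 + c^4 = 1$ throughout the set.

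I do not anticipate any real obstacle: the computation is a one-line cancellation, and the only point requiring care is invoking connectedness to pass from $f\,' \equiv 0$ to $f$ constant (in the holomorphic setting this is immediate on a connected domain). The $3$ in the exponents of the system is precisely what makes the $4$th powers the right conserved combination, mirroring how the linear system conserves squares and the quadratic system conserves cubes.
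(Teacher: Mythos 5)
Your proof is correct and takes essentially the same route as the paper: differentiate $s^4 + c^4$, substitute $s\,' = c^3$ and $c\,' = -s^3$ to see the derivative vanish, and evaluate at $0$ to fix the constant at $1$. The remark about connectedness is the same implicit appeal the paper makes, so there is nothing to add.
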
 

\begin{proof} 
Differentiation yields 
$$(s^4 + c^4)\,' = 4 s^3 (s\,') + 4 c^3 (c\,') = 4 s^3 (c^3) + 4 c^3 (-s^3) = 0$$
and evaluation at $0$ places the constant value at $1$. 
\end{proof} 

\medbreak 

More generally, setting aside the initial conditions, if functions $s$ and $c$ satisfy the differential equations $s\,' = c^3$ and $c\,' = - s^3$ then $s^4 + c^4$ is constant on any connected open set. 

\medbreak 

The Picard existence-uniqueness theorem ensures that {\bf IVP} has a unique pair of holomorphic solutions $(s, c)$ in a sufficiently small disc about the origin. For the sake of completeness, we shall offer an appropriate disc; however, the size of such a disc is immaterial for our present purposes. As references for the Picard theorem we cite [3] Section 2.3 and [4] Section 3.3 (modified for complex use as mentioned in Section 12.1).

\medbreak 

\begin{theorem} \label{Picard}
The system {\bf IVP} has a unique holomorphic solution pair $(s, c)$ in the disc $\mathbb{D}_r$ of radius $r = 2^2/3^3$ about $0$. 
\end{theorem}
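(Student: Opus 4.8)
The plan is to apply the complex-analytic form of the Picard existence--uniqueness theorem to the autonomous system and then to optimize the size of the disc it supplies. First I would recast \textbf{IVP} as a single vector equation $\mathbf{y}\,' = \mathbf{F}(\mathbf{y})$ with $\mathbf{y} = (s, c)$, initial value $\mathbf{y}(0) = (0, 1)$, and $\mathbf{F}(s, c) = (c^3, -s^3)$. The crucial structural feature is that $\mathbf{F}$ is \emph{entire} in its arguments, so that the domain of $\mathbf{F}$ imposes no restriction on the independent variable; all that is needed is a bound on $\mathbf{F}$ near the initial value.

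Next I would bound $\mathbf{F}$ on a polydisc centred at the initial value. Fix $b > 0$ and set $P_b = \{(s, c) : |s| \le b, \ |c - 1| \le b\}$. On $P_b$ one has $|c| \le 1 + b$ and $|s| \le b \le 1 + b$, whence $\|\mathbf{F}\|_\infty = \max\{|c^3|, |s^3|\} \le (1 + b)^3 =: M$. The cited form of the Picard theorem then yields a unique holomorphic solution on the disc about $0$ of radius $\min(a, b/M)$, where $a$ is any permissible bound on the independent variable; since $\mathbf{F}$ is entire we may take $a = \infty$, leaving the effective radius $b/M = b/(1 + b)^3$.

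It remains to choose $b$ so as to maximize this radius. Differentiating gives $\tfrac{d}{db}\, b(1+b)^{-3} = (1 - 2b)(1 + b)^{-4}$, which vanishes at $b = 1/2$ and there attains its maximum value
$$\frac{1/2}{(3/2)^3} = \frac{4}{27} = \frac{2^2}{3^3}.$$
Taking $b = 1/2$ therefore delivers the asserted disc $\mathbb{D}_r$ with $r = 2^2/3^3$, on which the solution pair $(s, c)$ exists and is unique.

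The computation of $M$ and the one-variable optimization are routine; the only delicate point is reconciling the precise radius guaranteed by references [3] and [4], which are framed for real ODE and are to be \emph{modified for complex use}. I would therefore verify directly that the successive-approximation scheme $\mathbf{y}_{n+1}(z) = (0,1) + \int_0^z \mathbf{F}(\mathbf{y}_n(\zeta))\, d\zeta$ keeps every iterate inside $P_b$ for $|z| \le b/M$, since $\|\mathbf{y}_{n+1}(z) - (0,1)\|_\infty \le M|z| \le b$; this confirms that the bound $M = (1+b)^3$ persists throughout the iteration, so that the contraction estimate is legitimate and the radius $b/M$ is genuinely attained. Establishing this invariance is the step I expect to require the most care.
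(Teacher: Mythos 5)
Your proposal is correct and follows essentially the same route as the paper: bound the right-hand side on the polydisc $|s| \le b$, $|c-1| \le b$ by $(1+b)^3$, invoke the Picard theorem (using autonomy/entirety to discard any constraint from the independent variable) to get a solution disc of radius $b/(1+b)^3$, and maximize at $b = 1/2$ to obtain $r = 2^2/3^3$. Your additional verification that the successive approximations remain in the polydisc is a sound elaboration of what the cited form of the Picard theorem already guarantees, but it does not change the argument.
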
 

\begin{proof} 
 Fix $b > 0$: for $|s| \leqslant b$ and $|c - 1| \leqslant b$ we have $|s^3| \leqslant b^3$ and $|c^3| \leqslant (b + 1)^3$; as the system {\bf IVP} is autonomous, the Picard theorem ensures that this system has a unique holomorphic solution in the disc of radius $b/(b + 1)^3$ about the origin. The radius of this disc is maximized by taking $b = 1/2$: thus {\bf IVP} has a unique holomorphic solution in the disc of radius $r = 2^2/3^3$ about $0$.
\end{proof} 

\medbreak 

Incidentally, we may use the `Pythagorean' identity of Theorem \ref{Pyth} to extract from {\bf IVP} a corresponding initial value problem for $s$ alone: namely, 
$$s\,' = (1 - s^4)^{3/4}, \; \; s\,'(0) = 1.$$
Here, the initial condition $s\,'(0) = 1$ both forces $s(0) = 0$ and specifies the (principal) branch of the power; the initial condition $s(0) = 0$ alone would be inadequate. A similar application of the Picard theorem to this initial value problem furnishes a unique holomorphic solution in the larger disc of radius $(2^2/3^3)^{1/4}$ about $0$; this solution $s$ takes values in the open unit disc, so a companion holomorphic $c$ is provided by $(1 - s^4)^{1/4}$ with principal-valued power. 

\medbreak 

The solutions $s$ and $c$ to {\bf IVP} in the disc $\mathbb{D}_r$ of radius $r = 2^2/3^3$ about $0$ satisfy certain symmetry properties. Under multiplication by the imaginary unit $\ii$ they behave as follows. 

\medbreak 

\begin{theorem} \label{i}
If $|z| < r$ then $s(\ii z) = \ii s(z)$ and $c(\ii z) = c(z)$. 
\end{theorem}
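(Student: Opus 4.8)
The plan is to deduce the identities from the uniqueness half of Theorem \ref{Picard} by a substitution argument. First I would observe that the disc $\mathbb{D}_r$ is invariant under multiplication by $\ii$, since $|\ii z| = |z|$; consequently $z \mapsto s(\ii z)$ and $z \mapsto c(\ii z)$ are holomorphic on $\mathbb{D}_r$, and it is legitimate to compare them with $s$ and $c$ there.

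Next I would introduce the candidate pair $S(z) := -\ii\, s(\ii z)$ and $C(z) := c(\ii z)$, engineered so that the claimed identities $s(\ii z) = \ii s(z)$ and $c(\ii z) = c(z)$ become exactly $S = s$ and $C = c$. The strategy is to verify that $(S, C)$ is itself a solution of {\bf IVP} on $\mathbb{D}_r$; uniqueness then forces $S = s$ and $C = c$ throughout $\mathbb{D}_r$, which is precisely the assertion.

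Two things then need checking. The initial conditions are immediate: $S(0) = -\ii\, s(0) = 0$ and $C(0) = c(0) = 1$. For the differential equations I would apply the chain rule together with the defining relations $s\,' = c^3$ and $c\,' = -s^3$. The first yields $S\,'(z) = -\ii \cdot \ii \cdot s\,'(\ii z) = s\,'(\ii z) = c(\ii z)^3 = C(z)^3$, using $-\ii \cdot \ii = 1$. The second yields $C\,'(z) = \ii\, c\,'(\ii z) = -\ii\, s(\ii z)^3$, which must be reconciled with $-S(z)^3 = -(-\ii)^3\, s(\ii z)^3$ through the arithmetic of powers of $\ii$.

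The only point requiring any care---and the closest thing to an obstacle---is this bookkeeping of the factors of $\ii$ in the second equation: one checks that $(-\ii)^3 = \ii$, so that $-S^3 = -\ii\, s(\ii z)^3 = C\,'$. This is the computation that makes the cubic exponent cooperate with the quarter-turn $z \mapsto \ii z$, and it is precisely why the power $3$ here (rather than the powers $1$ and $2$ of the trigonometric and Dixonian cases) produces this particular symmetry. Once that identity is confirmed, $(S, C)$ solves {\bf IVP} and uniqueness completes the proof.
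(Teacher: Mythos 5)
Your proposal is correct and follows exactly the paper's own argument: define the auxiliary pair $f(z) = -\ii\, s(\ii z)$, $g(z) = c(\ii z)$, verify that it satisfies {\bf IVP} (the key arithmetic $(-\ii)^3 = \ii$ included), and invoke the uniqueness clause of Theorem \ref{Picard}. Your added remark about why the cubic power is what makes the quarter-turn symmetry work is accurate but not part of the paper's proof.
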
 

\begin{proof} 
Define functions $f$ and $g$ by $f(z) = - \ii s(\ii z)$ and $g(z) = c(\ii z)$. By differentiation, $f\,' = g^3$ and $g\,' = - f^3$; by evaluation, $f(0) = 0$ and $g(0) = 1$. Now appeal to the uniqueness clause in Theorem \ref{Picard} to conclude that $f = s$ and $g = c$. 
\end{proof} 

\medbreak 

A similar argument shows that $s$ and $c$ are `real' in the sense that if $|z| < r$ then $s(\overline{z}) = \overline{s(z)}$ and $c(\overline{z}) = \overline{c(z)}$. As an immediate corollary to Theorem \ref{i}, $s$ is an odd function and $c$ is an even function. 

\medbreak 

We now wish to explore the possibility of extending the solutions $s$ and $c$ of {\bf IVP} beyond the disc $\mathbb{D}_r$ provided by Theorem \ref{Picard} or beyond the larger disc provided by the subsequent remark. Of course, if these functions are extended to a suitably symmetric connected open set then the symmetries just discussed will continue to hold by the principle of analytic continuation. 

\medbreak 

The fundamental question to be answered is whether the extension of $s$ and $c$ runs up against singularities (be they poles or otherwise) or the necessity of forming branches. We can answer this question with regard to poles decisively and at once. 

\medbreak 

\begin{theorem} \label{nopoles}
Isolated singularities of functions $s$ and $c$ that satisfy $s\,' = c^3$ and $c\,' = - s^3$ cannot be poles. 
\end{theorem}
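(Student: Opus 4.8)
The plan is to argue by contradiction, exploiting the fact that the two equations couple $s$ and $c$ so rigidly that a pole of either is incompatible with the prescribed effects of differentiation and cubing on pole orders. By the symmetry of the system under $(s, c) \mapsto (c, -s)$, which sends solutions to solutions, it suffices to treat the case in which the isolated singularity $z_0$ is a pole of $s$, say of order $m \geqslant 1$.

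The first step is to promote this to a pole of $c$ as well. Since $s$ has a pole at $z_0$, so does $s' = c^3$; hence $c^3 \to \infty$ and therefore $|c| \to \infty$ as $z \to z_0$. An isolated singularity at which a function tends to $\infty$ is necessarily a pole, so $c$ too has a pole at $z_0$, of some order $n \geqslant 1$, and in particular both functions are meromorphic near $z_0$.

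With meromorphy in hand, the contradiction comes from counting pole orders on the two sides of each equation, using that differentiation raises the order of a pole by exactly one while cubing multiplies it by three:
$$m + 1 = 3n \quad\text{and}\quad n + 1 = 3m.$$
Adding these gives $m + n = 1$, which no positive integers can satisfy; subtracting them instead gives $m = n$ and then $m + 1 = 3m$, forcing the impossible value $m = \tfrac12$. Either way the assumed pole cannot exist.

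I expect the only genuinely delicate point to be the passage from a pole of $s$ to a pole of $c$ in the second step — that is, excluding the mixed possibility that one function has a pole while the other carries an essential singularity. The ``tends to infinity'' characterisation of a pole handles this cleanly, converting the coupling $s' = c^3$ directly into information about $c$. As an even quicker alternative, the constancy of $s^4 + c^4$ from Theorem \ref{Pyth} shows at once that a pole of $s$ of order $m$ forces $c^4$ to cancel it, so that $c$ has a pole of the same order $m = n$, after which a single equation supplies the contradiction.
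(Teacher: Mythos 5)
Your proposal is correct and follows essentially the same route as the paper: establish that a pole of one function forces a pole of the other, then derive $m+1=3n$ and $n+1=3m$ from the two equations and observe that the only solution is the non-integer $m=n=\tfrac12$. Your justification of copolarity via $|c|\to\infty$ (and the alternative via $s^4+c^4$ being constant) simply fills in what the paper dismisses as ``plainly copolar.''
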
 

\begin{proof} 
The functions $s$ and $c$ are plainly copolar, in the sense that if either has a pole at some point then so does the other. Consider a pole: of order $m$ for $s$ and of order $n$ for $c$. From $s\, = c^3$ follows $m + 1 = 3 n$; from $c\,' = - s^3$ follows $n + 1 = 3 m$. These two equations for $m$ and $n$ have $1/2$ as their unique solution. 
\end{proof}

\medbreak 

This proof suggests the possibility that one or more of the products $s c$, $s^2$ and $c^2$ might admit extension as a meromorphic function with simple poles. In the next Section we realize this possibility and improve upon it, thereby simultaneously addressing both singularities and branches. 

\medbreak 

\section*{The Elliptic Functions}

\medbreak 

We begin with the product of the functions $s$ and $c$ appearing in Theorem \ref{Picard}: thus, let $p = s \, c$. By differentiation, 
$$p\,' = s\,' \, c + s \, c\,' = c^4 - s^4$$
so that 
$$(p\, ')^2 = (c^4 + s^4)^2 - 4 s^4 c^4 = 1 - 4 \, p^4$$
(by Theorem \ref{Pyth}) and 
$$p\, '' = 4 c^3 c\,' - 4 s^3 s\,' = - 8 s^3 c^3  = - 8 \, p^3.$$
Rescale: define $P$ by the rule that if $|z| < r \sqrt2$ then 
$$P (z) = \sqrt2 \, p(\tfrac{1}{\sqrt2} \, z).$$ 
We conclude that $P$ satisfies the following second-order initial value problem: 
$$P\,'' = - 2 P^3 ; \; \; P(0) = 0, \; P\,'(0) = 1.$$ 
This enables us to relate the product $s \, c$ to the lemniscatic sine function $\sll$ of Gauss and so in turn to the Glaisher quotient ${\rm s d} = {\rm sn}/{\rm dn}$ of the Jacobian functions with self-complementary modulus $1/\sqrt2$. 

\medbreak 

\begin{theorem} \label{sc}
If $|z| < r$ then $s(z)  c(z) = \tfrac{1}{\sqrt2} \, \sll (\sqrt2 z) = \tfrac{1}{2} \, {\rm sd} ( 2 z).$
\end{theorem}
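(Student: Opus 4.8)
The plan is to prove the two asserted equalities in turn, in each case by reducing a rescaled version of the target function to an initial value problem already solved and then invoking uniqueness. For the first equality I would recall that the lemniscatic sine is defined by inverting $u = \int_0^{\sll(u)} dt/\sqrt{1 - t^4}$, so that $(\sll')^2 = 1 - \sll^4$ with $\sll(0) = 0$ and $\sll'(0) = 1$; differentiating this first-order relation gives $\sll'' = -2\,\sll^3$. Hence $\sll$ and $P$ satisfy one and the same second-order problem $f'' = -2 f^3$, $f(0) = 0$, $f'(0) = 1$. Rewriting it as the first-order system $f' = g$, $g' = -2 f^3$, $f(0) = 0$, $g(0) = 1$ and applying the Picard theorem (the right-hand side being polynomial, hence holomorphic) gives $P = \sll$ near $0$; unwinding $P(z) = \sqrt2\, p(z/\sqrt2)$ then yields $s(z) c(z) = p(z) = \tfrac{1}{\sqrt2}\,\sll(\sqrt2 z)$.

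For the second equality I would produce the differential equation satisfied by ${\rm sd} = {\rm sn}/{\rm dn}$ at the self-complementary modulus $k = 1/\sqrt2$. From ${\rm sn}' = {\rm cn}\,{\rm dn}$, ${\rm cn}' = -{\rm sn}\,{\rm dn}$, ${\rm dn}' = -k^2\,{\rm sn}\,{\rm cn}$ together with ${\rm dn}^2 + k^2\,{\rm sn}^2 = 1$, a short computation gives $({\rm sd})' = {\rm cn}/{\rm dn}^2$ and then, after eliminating ${\rm sn}$, ${\rm cn}$, ${\rm dn}$ in favour of $y = {\rm sd}$, the relation $(y')^2 = (1 + (k^2 - 1) y^2)(1 + k^2 y^2)$. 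At $k^2 = 1/2$ the quadratic term cancels and the right-hand side collapses to $1 - \tfrac14 y^4$, while $y(0) = 0$ and $y'(0) = 1$. Setting $q(z) = \tfrac12\,{\rm sd}(2z)$, the chain rule converts this into $(q')^2 = 1 - 4 q^4$ with $q(0) = 0$, $q'(0) = 1$ — exactly the first-order equation and initial data already recorded for $p$. Differentiating once more to reach the polynomial system $q'' = -8 q^3$ and again invoking uniqueness gives $q = p$, that is $\tfrac12\,{\rm sd}(2z) = s(z) c(z) = \tfrac{1}{\sqrt2}\,\sll(\sqrt2 z)$.

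The Jacobi derivative bookkeeping and the rescalings are routine; the step carrying the real content, and where I would expect any subtlety, is verifying that the quartic $(1 + (k^2 - 1) y^2)(1 + k^2 y^2)$ loses its $y^2$ term precisely when $k^2 = 1/2$. It is this collapse — a direct manifestation of the modulus being self-complementary — that matches ${\rm sd}$ to the lemniscatic equation $(y')^2 = 1 - \tfrac14 y^4$ and thereby ties all three functions together.
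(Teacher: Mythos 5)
Your proof is correct and follows essentially the same route as the paper: both identify the rescaled product $P$ with the lemniscatic sine via the second-order problem $P\,'' = -2P^3$, $P(0)=0$, $P\,'(0)=1$ together with Picard uniqueness and analytic continuation. The only difference is that where the paper simply recalls the standard expression of $\sll$ in terms of ${\rm sd}$ at the self-complementary modulus, you derive it by computing $({\rm sd}\,')^2 = (1+(k^2-1)\,{\rm sd}^2)(1+k^2\,{\rm sd}^2)$ and observing its collapse to $1 - \tfrac14 {\rm sd}^4$ at $k^2 = 1/2$ --- a correct, self-contained substitute for the citation.
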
 

\begin{proof} 
The second-order initial value problem displayed immediately prior to the Theorem characterizes the lemniscatic sine function: $P$ coincides with $\sll$ on a neighbourhood of $0$ by the Picard existence-uniqueness theorem and thence throughout the disc $\mathbb{D}_{r \sqrt2}$ of radius $r \sqrt2$ about $0$ by the principle of analytic continuation; all that remains is to undo the rescaling and recall the expression for $\sll$ in terms of ${\rm sd}$.  
\end{proof} 

\medbreak 

That is, the product $s \, c$ agrees with the elliptic function $z \mapsto \tfrac{1}{2} {\rm sd}(2 \, z)$ on the disc $\mathbb{D}_r$; in other words, $s \, c$ extends to this elliptic function by analytic continuation. In short, we may simply say that $s \, c$ is this elliptic function, taking similar liberties in Theorem \ref{c2} and Theorem \ref{s2}. 

\medbreak

 We now turn to the squares $S = s^2$ and $C = c^2$ of $s$ and $c$. We take $C$ first, as this case is a little more straightforward. By differentiation, 
$$C\,' = 2 c \, c\,' = - 2 c \, s^3$$
whence Theorem \ref{Pyth} yields 
$$(C\,')^4 = 16 c^4 (s^4)^3 = 16 c^4 (1 - c^4)^3.$$ 
Thus, $C$ satisfies the first-order differential equation 
$$(C\,')^4 = 16 \, C^2 \, (1 - C^2)^3$$ 
of fourth degree, along with the initial condition $C(0) = 1$. This is a differential equation of Briot-Bouquet type: see page 423 of [3] and page 314 of [4]; it may be solved as follows. 

\medbreak 

First, the substitution $E = C^{-1}$ has the effect of removing the quadratic factor from the right side: explicitly, $E\,' = - C^{-2} C\,'$ so that 
$$(E\,')^4 = C^{-8}\, (C\,')^4 = C^{-8} \, 16\, C^2 \, (1 - C^2)^3 = 16 \, C^{-6} \, (1 - C^2)^3 = 16 \, (C^{-2} - 1)^3$$
and therefore 
$$(E\,')^4 = 16 \, (E^2 - 1)^3 = 16 \, (E - 1)^3 \, (E + 1)^3.$$
Next, the substitution $F = (E - 1)^{-1}$ yields 
$$(F\,')^4 = 128 \, F^2 (F + \tfrac{1}{2})^3$$ 
and the substitution $G^2 = F + \tfrac{1}{2}$ leads to   
$$(G\,')^4 = 8 \, (G^2 - \tfrac{1}{2})^2 \, G^2$$ 
whence 
$$(G\,')^2 = 2 \sqrt2 G (G^2 - \tfrac{1}{2})$$
after a choice of square-root. Now make a final substitution $H = \tfrac{1}{\sqrt2} \, G$: reversal of the various substitutions reveals that 
$$C = \frac{4 \, H^2 - 1}{4 \, H^2 + 1} \, .$$ 
\medbreak 
\noindent 
On the one hand, $H$ satisfies the first-order differential equation 
$$(H\,')^2 = 4 \, H^3 - H; $$
on the other hand, $H$ has a pole at the origin since $C(0) = 1$. These conditions force $H$ to be the {\it lemniscatic} Weierstrass function $\wp$ with invariants $g_2 = 1$ and $g_3 = 0$. 

\medbreak 

\medbreak 

\begin{theorem} \label{c2}
The square $C = c^2$ is given by 
$$C = \frac{\wp^2 - 1/4}{\wp^2 + 1/4}$$ 
where $\wp$ is the Weierstrass function with invariants $g_2 = 1$ and $g_3 = 0$. 
\end{theorem}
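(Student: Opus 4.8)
The plan is to complete the reduction already begun in the discussion preceding the statement and then to identify the resulting Weierstrass equation. Starting from the fourth-degree Briot--Bouquet equation $(C\,')^4 = 16\,C^2\,(1 - C^2)^3$ with $C(0) = 1$, which follows by differentiating $C = c^2$ and applying Theorem \ref{Pyth}, I would run the chain of substitutions $E = C^{-1}$, $F = (E - 1)^{-1}$, $G^2 = F + \tfrac12$, $H = \tfrac{1}{\sqrt2}\,G$ exactly as set out above. Each substitution simplifies the right-hand side, and the cumulative effect is to replace the quartic equation for $C$ by the standard Weierstrass cubic $(H\,')^2 = 4H^3 - H$; read in reverse, the same substitutions express $C$ as the rational function $C = (4H^2 - 1)/(4H^2 + 1)$ of $H$.

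The decisive step is the identification of $H$. The equation $(H\,')^2 = 4H^3 - H$ is the Weierstrass differential equation with invariants $g_2 = 1$ and $g_3 = 0$, so every solution is a translate $\wp(\cdot - z_0)$ of the lemniscatic $\wp$. To pin down the translate I would follow the pole forced by $C(0) = 1$. Since $c(z) = 1 - \tfrac14 z^4 + \cdots$ gives $C(z) = 1 - \tfrac12 z^4 + \cdots$, the substitutions produce a pole of $F$ of order four at the origin and hence a double pole of $H$ there; moreover the cubic equation itself forces the leading Laurent coefficient $a$ to satisfy $4a^2 = 4a^3$, so $a = 1$. A double pole at $0$ with leading term $z^{-2}$ is precisely the normalization of $\wp$, and by the double periodicity of $\wp$ the only translate with a pole at the origin is $\wp$ itself; thus $H = \wp$.

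It then remains only to substitute $H = \wp$ into $C = (4H^2 - 1)/(4H^2 + 1)$ and to divide numerator and denominator by $4$, obtaining
$$C = \frac{\wp^2 - 1/4}{\wp^2 + 1/4},$$
as asserted.

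I expect the main obstacle to be the bookkeeping of the sign and branch choices introduced by the fourth- and square-root extractions, in particular the passage from $(G\,')^4 = 8\,(G^2 - \tfrac12)^2\,G^2$ to $(G\,')^2 = 2\sqrt2\,G\,(G^2 - \tfrac12)$ and the choice $G^2 = F + \tfrac12$. The saving observation is that the final expression for $C$ depends on $H$ only through $H^2$, so it is insensitive to the sign of $H$; the various ambiguities therefore collapse and do not disturb the stated identity, once one confirms that the leading coefficient emerges as $+1$ so that $H$ is $\wp$ itself rather than a rescaling.
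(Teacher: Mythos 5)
Your proposal is correct and follows essentially the same route as the paper: the chain of substitutions $E = C^{-1}$, $F = (E-1)^{-1}$, $G^2 = F + \tfrac12$, $H = \tfrac{1}{\sqrt2}G$ reducing to $(H\,')^2 = 4H^3 - H$, identification of $H$ with the lemniscatic $\wp$ via the pole at the origin forced by $C(0)=1$, and reversal of the substitutions, with the same observation that the sign ambiguity in $H$ is harmless because only $H^2$ appears. Your extra detail pinning down the translate and the leading Laurent coefficient ($4a^2 = 4a^3$, so $a=1$) only makes explicit what the paper leaves implicit.
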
 

\begin{proof} 
Essentially a reversal of the arguments that led to the theorem. In the result, the Weierstrass function may be replaced by its negative, since only its square appears. In the derivation, this ambiguity arises from the substitution $G^2 = F + \tfrac{1}{2}$ and the attendant choice of square-root. 
\end{proof} 

\medbreak 

The analysis for $S$ runs parallel to that for $C$ until the point at which the initial condition intervenes. From $S\,' = 2 s \, c^3$ it follows that 
$$(S\,')^4 = 16 \, S^2 \, (1 - S^2)^3$$
and a repetition of the foregoing argument shows that 
$$S = \frac{4 \, H^2 - 1}{4 \, H^2 + 1}$$
where 
$$(H\,')^2 = 4 \, H^3 - H.$$
This requires that $H$ be a translate of the Weierstrass function $\wp$ that appears in Theorem \ref{c2}: thus, for some $k$ and for all $z$, 
$$S(z) =  \frac{ \wp(z - k)^2 - 1/4}{ \wp(z - k)^2 + 1/4}.$$ 
The initial condition $S(0) = 0$ forces $\wp(k)^2 = 1/4$ so that $\wp(k)$ is a mid-point value $\pm 1/2$ of the lemniscatic $\wp$: modulo periods, $k$ is either the real half-period 
$$\omega = 2 \int_0^1 \frac{{\rm d} \tau \; \; \; }{(1 + \tau^4)^{1/2}} = 1.85407467730 ... $$ 
for which $\wp(\omega) = 1/2$ or the purely imaginary half-period $\ii \, \omega$ for which $\wp(\ii \, \omega) =  - 1/2$; we claim that the former half-period is appropriate, rather than the latter. Direct calculation (for instance, using the $\wp$ addition formula) reveals that 
$$\wp(z - \omega) = \frac{1}{2} \, \frac{\wp(z) + 1/2}{\wp(z) - 1/2}$$
whence 
$$\frac{ \wp(z - \omega)^2 - 1/4}{ \wp(z - \omega)^2 + 1/4} = \frac{\wp(z)}{\wp(z)^2 + 1/4}$$
and therefore 
$$\frac{ \wp(z - \ii \, \omega)^2 - 1/4}{ \wp(z - \ii \, \omega)^2 + 1/4} = - \, \frac{\wp(z)}{\wp(z)^2 + 1/4}$$
\medbreak
\noindent
because the lemniscatic $\wp$ satisfies $\wp(\ii \, z) = - \wp(z)$. Finally, we see that $k = \omega$ must be chosen: for if $t$ is real then (as noted after Theorem \ref{i}) $s(t)$ is real so that $S(t) = s(t)^2 \geqslant 0$. 

\medbreak 

\begin{theorem} \label{s2}
The square $S = s^2$ is given by 
$$S = \frac{\wp}{\wp^2 + 1/4}$$
where $\wp$ is the Weierstrass function with invariants $g_2 = 1$ and $g_3 = 0$.
\end{theorem}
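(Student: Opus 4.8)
The plan is to reverse the derivation displayed immediately before the theorem, exactly as in Theorem \ref{c2}, and then to settle the one genuinely new point: the correct choice of half-period $k$.

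First I would note that $S$ satisfies the very same fourth-degree Briot--Bouquet equation $(S\,')^4 = 16\,S^2\,(1 - S^2)^3$ as $C$ does, so the entire substitution chain $E = C^{-1}$, $F = (E-1)^{-1}$, $G^2 = F + \tfrac12$, $H = \tfrac{1}{\sqrt2}\,G$ applies to $S$ verbatim and yields $S = \frac{4H^2 - 1}{4H^2 + 1}$ with $(H\,')^2 = 4H^3 - H$. Since this is the differential equation of the lemniscatic $\wp$ (invariants $g_2 = 1$, $g_3 = 0$) and $H$ must carry a pole, $H$ is a translate $z \mapsto \wp(z - k)$ of that $\wp$, giving $S(z) = \frac{\wp(z-k)^2 - 1/4}{\wp(z-k)^2 + 1/4}$.

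Next I would impose the initial condition. The requirement $S(0) = 0$ forces $\wp(k)^2 = \tfrac14$, hence $\wp(k) = \pm\tfrac12$, so that modulo periods $k$ equals either the real half-period $\omega$ or the purely imaginary half-period $\ii\,\omega$. The addition-formula computations already recorded then make both cases explicit: the choice $k = \omega$ produces $S = \frac{\wp}{\wp^2 + 1/4}$, while the choice $k = \ii\,\omega$ produces its negative, the sign coming from $\wp(\ii\,z) = -\wp(z)$.

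The one step that genuinely needs an argument is the sign, and this I expect to be the crux. Since $s$ is real on the real axis (as noted after Theorem \ref{i}), we have $S(t) = s(t)^2 \geqslant 0$ for real $t$. The denominator $\wp^2 + \tfrac14$ is positive, and the lemniscatic $\wp$ behaves like $1/t^2$ near the origin and so is positive for small real $t \neq 0$; hence $\frac{\wp(t)}{\wp(t)^2 + 1/4} > 0$ there. This positivity is compatible only with the choice $k = \omega$, which excludes the translate by $\ii\,\omega$ and delivers the stated formula.
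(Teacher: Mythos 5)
Your proposal is correct and follows essentially the same route as the paper: repeat the substitution chain used for $C$ to obtain $S = (4H^2-1)/(4H^2+1)$ with $(H\,')^2 = 4H^3 - H$, identify $H$ as a translate $\wp(z-k)$, reduce $k$ to $\omega$ or $\ii\,\omega$ via $S(0)=0$ and the addition formula, and fix the sign by the positivity of $S$ on the real axis. Your explicit remark that $\wp(t) \sim 1/t^2 > 0$ for small real $t$ merely makes a step the paper leaves implicit.
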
 

\begin{proof} 
Above. 
\end{proof} 

\medbreak 

It is perhaps barely worth noting that the expressions for $C$ and $S$ in Theorem \ref{c2} and Theorem \ref{s2} satisfy $S^2 + C^2 = 1$ as they should. 

\medbreak 

It is certainly worth noting that $s^2$ and $c^2$ satisfy the second-order system 
$$S\,'' = 2 \, C^3 - 6 \, S^2 \, C \; \; {\rm and} \; \; C\,'' = 2 S^3 - 6 \, C^2 \, S$$ 
with 
$$S(0) = 0, \, S\,'(0) = 0 \; \; {\rm and} \; \; C(0) = 1, \, C\,'(0) = 0$$
as do the rational functions of the lemniscatic $\wp$ displayed in Theorem \ref{c2} and Theorem \ref{s2}. The right members of these differential equations being polynomial in $S$ and $C$, the uniqueness clause in the Picard theorem applies to justify the conclusions of Theorem \ref{c2} and Theorem \ref{s2}. We may instead proceed to fourth order, each of $S$ and $C$ being a solution to the differential equation 
$$F\,'''' = -  12 \, F \, (32  F^4 - 40 F^2 + 9).$$

\medbreak 

It is also worth noting that the results displayed in  Theorem \ref{sc}, Theorem \ref{c2} and Theorem \ref{s2} are consistent. That this is so follows from special properties of the lemniscatic $\wp$: for this Weierstrass function, the Glaisher quotient ${\rm sd}$ satisfies 
$${\rm sd}^2 = \frac{1}{\wp}$$
and the duplication formula for $\wp$ gives 
$$\wp(2 z) = \frac{(\wp(z)^2 + 1/4)^2}{\wp\,'(z)^2}$$
whence  
$$\Big(\frac{1}{2} \, {\rm sd}(2 z)\Big)^2 = \frac{1}{4} \, \Big( \frac{\wp\,'(z)}{\wp(z)^2 + 1/4} \Big)^2 =  \frac{ \wp(z)^2 - 1/4}{ \wp(z)^2 + 1/4} \, \frac{\wp(z)}{\wp(z)^2 + 1/4}. \, $$

\medbreak 

In fact, we can go further and extract the square-root: an examination of behaviour as $z \to 0$ reveals that 
$${\rm sd} (2 z) = - \, \frac{\wp\,'(z)}{\wp(z)^2 + 1/4}\, .$$
As a consequence, we may now return to Theorem \ref{sc} and express the product $s \, c$ directly in terms of $\wp$: thus, 
$$s \, c = - \, \frac{1}{2} \, \frac{\wp\,'}{\wp^2 + 1/4} \,.$$ 

\medbreak 

\medbreak 

Finally, we realize the possibility that opened up at the close of the previous Section. It follows from Theorem \ref{sc} (or the reformulation just given), Theorem \ref{c2} and Theorem \ref{s2} that each of $s \, c$, $c^2$ and $s^2$ extends to an elliptic function having (simple) poles exactly at the points congruent to $\tfrac{1}{2} (\pm 1 \pm \ii) \, \omega$ modulo the periods $2 \, \omega$ and $2 \, \ii  \, \omega$ of the lemniscatic Weierstrass function $\wp$. Notice that each of these three elliptic functions is holomorphic in the open disc of radius $\omega / \sqrt2$ about $0$. Within this disc, the function $\wp / (\wp^2 + 1/4)$ has a double zero at $0$ (originating as a removable singularity) but is otherwise nonzero; in this disc it therefore has two holomorphic square-roots, which extend the functions $s$ and $-s$. The simple poles of the function $\wp / (\wp^2 + 1/4)$ at the points $\tfrac{1}{2} (\pm 1 \pm \ii) \, \omega$ on the boundary of this disc sprout branches when the attempt is made to extend the holomorphic function $s$ beyond them. The situation as regards the function $c$ is slightly simpler, the function $(\wp^2 - 1/4) / (\wp^2 + 1/4)$ being zero-free in the same disc. 

\medbreak

\bigbreak

\begin{center} 
{\small R}{\footnotesize EFERENCES}
\end{center} 
\medbreak 

[1] E. Conrad and P. Flajolet, {\it The Fermat cubic, elliptic functions, continued fractions, and a combinatorial excursion}, S\'eminaire Lotharingien de Combinatoire 54 (2006) Article B54g.

\medbreak 

[2] A.C. Dixon, {\it On the doubly periodic functions arising out of the curve $x^3 + y^3 - 3 \alpha x y = 1$}, The Quarterly Journal of Pure and Applied Mathematics, 24 (1890) 167-233. 

\medbreak 

[3] E. Hille, {\it Ordinary Differential Equations in the Complex Domain}, Wiley-Interscience (1976); Dover Publications (1997).

\medbreak 

[4] E.L. Ince. {\it Ordinary Differential Equations}, Longman, Green and Company (1926); Dover Publications (1956). 

\medbreak 

[5] J.C. Langer and D.A. Singer, {\it The Trefoil}, Milan Journal of Mathematics, 82 (2014) 161-182. 

\medbreak 

[6] E. T. Whittaker and G. N. Watson, {\it A Course of Modern Analysis}, Second Edition, Cambridge University Press (1915).

\medbreak

\end{document}